\newtheorem{theorem}{Theorem}[section]
\theoremstyle{definition}
\newtheorem{definition}[theorem]{Definition}
\theoremstyle{remark}
\newtheorem{remark}[theorem]{Remark}
\newtheorem{example}[theorem]{Example}
\numberwithin{equation}{section}
\def\pn{\par\noindent}
\begin{document}



\vspace{1.3 cm}

\title{Existence theorem of finite Krasner hyperfields}
\author{Surdive Atamewoue Tsafack, Ogadoa Amassayoga, Babatunde O. Onasanya and Yuming Feng}



\maketitle

\begin{abstract}
The concern of this paper is to show that there  always exist Krasner hyperfields of order $n$, where $n$ is an integer greater than or equal to $2$.
\end{abstract}

\vskip 0.2 true cm

{\scriptsize
\hskip -0.4 true cm {\bf Key Words:} Krasner hyperrings, Krasner hyperfields.
\newline {\bf 2010 Mathematics Subject Classification:} 16S36, 16Y99, 20N20.
}

\pagestyle{myheadings}
\markboth{\rightline {\scriptsize  First Author and Second Author}}
         {\leftline{\scriptsize Short title of the paper}}

\bigskip
\section{Introduction}
In  classical algebra, notions of  group, ring and field are very useful and are studied for various reasons, most especially in the  understanding of our environment. Kasner, a French mathematician, introduced a tool for the study of approximation of valued fields.   He constructed a hyperoperation in which composition of two elements yield a set. He got a structure which was called canonical hypergroup  \cite{K83}.  Canonical hypergroups are to generalize the notion of group. Later, two of Krasner's students, Mittas and Stratigopoulos , studied hyperrings and hyperfields \cite{M69, M72, M73, S69}. These structures introduced by Krasner and his students have been extensively studied by many others who are interested in the characterization of  finite Krasner hyperstructures because of their increasingly growing applications in various directions \cite{A17, CL03, S20}.

There have been various approaches to the studies and constructions of finite Krasner hyperfields as can be seen in \cite{Massouros1, AEH20, IJAZ20}. However, it is clear that not all Krasner hyperfields can be constructed in these ways. Some researchers were interested in finding all hyperfields for a given number of elements \cite{AEH20, IJAZ20}. Also, by using some computer programs, some other researchers just conjecture the existence of finite Krasner hyperfields of a given order \cite{AEH20, IJAZ20}. Although, some algorithms claim to have the capacity to construct finite Krasner hyperfields, but they do not prove their existence. In this work, we prove that there always exist finite Krasner hyperfields of integer order $n\geq2$.


\section{Preliminaries}
In this section, all preliminary definitions and results which will be used throughout this work are discussed. More on this can be seen in\cite{CL03, DL07}.\\
\indent In particular, let $H$ be a non-empty set and $\mathcal{P}^\ast(H)$ be the set of all non-empty subsets of $H$. The map $$\oplus : H \times H \longrightarrow \mathcal{P}^\ast(H),$$ defined by $$(x, y)\mapsto x\oplus y \subseteq \mathcal{P}^\ast(H),$$ is called a {\it hyperoperation} and the couple $(H, \oplus)$ is called a {\it hypergroupoid}.
\begin{definition}
A hypergroupoid $(H, \oplus)$ is called a {\it semihypergroup} if,  for all $a, b, c$ in $H$, $$(a\oplus b)\oplus c = a\oplus (b\oplus c).$$
\end{definition}

\begin{definition}
A hypergroupoid $(H, \oplus)$ is called a {\it quasihypergroup} if for all $a\in H$, we have $$a\oplus H = H\oplus a = H.$$
\end{definition}
\begin{definition}
A hypergroupoid $(H, \oplus)$ which is both a semihypergroup and a quasihypergroup is called a {\it hypergroup}.
\end{definition}
\begin{definition}\label{Canonical hypergroup}
A {\it canonical hypergroup} $(R,\oplus)$ is an algebraic structure in which the following axioms hold:\begin{itemize}
\item [1.] For any $x, y, z\in R$, $x\oplus(y\oplus z)=(x\oplus y)\oplus z$,
\item [2.] For any $x, y\in R$, $x \oplus y = y \oplus x$,
\item [3.] There exists an additive identity $0\in R$ such that $0 \oplus x = \{x\}$ for every $x\in R$.
\item [4.] For every $x\in R$ there exists a unique element $x'$ (an opposite of $x$ with respect to hyperoperation ``$\oplus$") in $R$ such that $0\in x \oplus x'$,
\item [5.] For any $x, y, z\in R$, $z\in x \oplus y$ implies $y\in x'\oplus z$ and $x\in z\oplus  y'$.\end{itemize}
\end{definition}

\begin{remark}
 Note that, in the classical group $(R, +)$, the concept of opposite of $x\in R$ is the same as inverse.
\end{remark}

\begin{definition}
Let $(R, +, \cdot)$ be a ring. For any element $ x\in R $, $0$ is said to be bilaterally absorbing if $$x\cdot0 =0\cdot x = 0.$$
\end{definition}
A canonical hypergroup with a multiplicative operation which satisfies the following conditions is called a {\it Krasner hyperring}.
\begin{definition}
An algebraic  hyperstructure  $(R, \oplus, \cdot)$,  where ``$\cdot$" is usual multiplication on $R$,  is called a {\it Krasner hyperring} when the following axioms hold:\begin{itemize}
\item [1.] $(R, \oplus)$ is a canonical hypergroup with $0$ as additive identity,
\item [2.] $(R, \cdot)$ is a semigroup having $0$ as a bilaterally absorbing element,
\item [3.] The multiplication ``$\cdot$" is both left and right distributive over the hyperoperation ``$\oplus$".\end{itemize}
\end{definition}

A Krasner hyperring is called {\it commutative} (with unit element) if $(R, \cdot)$ is a commutative semigroup  (with unit element) and such is denoted $(R, \oplus, \cdot, 0, 1)$.
\begin{definition}
 Let $(R, \oplus, \cdot, 0, 1)$ be a commutative Krasner hyperring  with unit such that $(R\setminus\{0\}, \cdot, 1)$ is a group. Then,  $(R, \oplus, \cdot, 0, 1)$ is called a Krasner hyperfield.
\end{definition}
\begin{example}\cite{K83} Consider a field $(F, +, \cdot)$ and a subgroup $G$ of $(F\setminus\{0\},\cdot)$. Take $H=F/G= \{aG|~ a \in F\}$ with the hyperoperation and the multiplication given by:
$$ \left\{
     \begin{array}{ll}
       aG \oplus bG = { \{\bar{c}=cG |~ \bar{c} \in aG + bG\}} & \hbox{} \\
       aG \cdot bG = abG & \hbox{}
     \end{array}
   \right.$$
\indent Then $(H, \oplus, \cdot)$ is a Krasner hyperfield.
\end{example}

\begin{example}
Define a set $F=\{0,1,a,b,c\}$ a hyperoperation ``$\oplus$" and multiplication operation ``$\cdot$" as in the following tables:\\
\\
\begin{center}$\begin{array}{c|c|c|c|c|c}
   \oplus & 0 & 1 & a & b & c \\\hline
  0 & \{0\} & \{1\} & \{a\} & \{b\} & \{c\} \\\hline
   1 & \{1\} & \{1\} & \{1,a\} & \{0,1,a,b,c\} & \{1,c\} \\\hline
   a & \{a\} & \{1,a\} & \{a\} & \{a,b\} & \{0,1,a,b,c\} \\\hline
   b & \{b\} & \{0,1,a,b,c\} & \{a,b\} & \{b\} & \{b,c\} \\\hline
   c & \{c\} & \{1,c\} & \{0,1,a,b,c\} & \{b,c\} & \{c\}
 \end{array}
$\end{center}

\begin{center}
 $\begin{array}{c|c|c|c|c|c}
       \cdot & 0 & 1 & a & b & c \\\hline
       0 & 0 & 0 & 0 & 0 & 0 \\\hline
       1 & 0 & 1 & a & b & c \\\hline
       a & 0 & a & b & c & 1 \\\hline
       b & 0 & b & c & 1 & a \\\hline
       c & 0 & c & 1 & a & b
     \end{array}
$\end{center}
\vspace{1.0cm}
Then $(F,\oplus,\cdot)$ is a Krasner hyperfield with 5 elements which can be easily proved with the help of a computer program.
\end{example}
 The following example is one of the well-known construction of Krasner hyperfields from fields. It is due to the work of Massouros \cite{Massouros1}.
\begin{example}\label{ex2}
Let $(F, +, \cdot)$ be a field.  Define on $F$ the hyperoperation $$a \oplus b = \{a, b, a + b\},$$ for $a\neq b$, and $a, b \in F^*$,  where $F^*$ denotes the set of non zero elements of $F$. Then
$$a \oplus 0 = 0 \oplus a = \{a\},$$ for all $a \in F$ and $$a \oplus a'= F,$$ for all $a \in F$, where $a'$ is the opposite of the element $a$ with respect to the operation ``$+$" on $F$, (that is $a'+a=a+a'=0_F$).\\
\indent Therefore $(F, \oplus, \cdot)$ is a Krasner hyperfield.
\end{example}

\section{Existence theorem of finite Krasner hyperfields}
In this section, we prove that for any given integer $n\geq2$, there exist finite Krasner hyperfields of order $n$.

\begin{theorem}
There  exist  always finite  Krasner hyperfields of integer order $n\geq2$.
\end{theorem}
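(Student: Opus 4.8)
The plan is to construct, for each integer $n \geq 2$, one explicit Krasner hyperfield with exactly $n$ elements. The examples above all build a hyperfield out of a pre-existing field, and since finite fields exist only in prime-power orders it is not transparent that such constructions realize \emph{every} $n$; I therefore prescribe the structure directly on an arbitrary set of size $n$. The guiding observation is that in a Krasner hyperfield the multiplicative part $(R\setminus\{0\},\cdot,1)$ is an abelian group, and an abelian group of order $n-1$ always exists. So I would set $G=\mathbb{Z}_{n-1}$, the cyclic group of order $n-1$ written multiplicatively, adjoin a new symbol $0\notin G$, and let $R=G\cup\{0\}$, so that $|R|=n$. Multiplication on $R$ extends that of $G$ by declaring $0$ bilaterally absorbing; then $(R\setminus\{0\},\cdot,1)=G$ is by fiat an abelian group with unit and $(R,\cdot)$ is a commutative semigroup with absorbing $0$, which disposes of the multiplicative axioms at once.

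It remains to equip $R$ with a hyperaddition making $(R,\oplus)$ a canonical hypergroup over which ``$\cdot$'' distributes. I would use the ``maximal'' rule
\[
x \oplus y =
\begin{cases}
\{x\} & \text{if } y = 0,\\
\{y\} & \text{if } x = 0,\\
R & \text{if } x = y \neq 0,\\
R \setminus \{0\} & \text{if } x \neq y \text{ and } x, y \neq 0.
\end{cases}
\]
The shape of this table is essentially forced by the axioms of Definition \ref{Canonical hypergroup}: the element $0$ must be a strict identity, which dictates the first two lines, and each element should be its own opposite, so $0$ must lie in $x\oplus x$ and in no other nonzero sum, which dictates the last two lines.

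To verify that $(R,\oplus)$ is a canonical hypergroup, commutativity and the identity property are immediate from the symmetry of the table. For opposites, $0\in x\oplus y$ holds precisely when $x=y$, so every $x$ has the unique opposite $x'=x$. Reversibility then reduces, since $x'=x$, to showing that $z\in x\oplus y$ implies $y\in x\oplus z$ (and symmetrically $x\in z\oplus y$), which I would settle by a short case analysis on how many of $x,y,z$ are zero and which coincide. The central computation is associativity: for nonzero $x,y,z$ I expect both $(x\oplus y)\oplus z$ and $x\oplus(y\oplus z)$ to collapse to all of $R$, because extending $\oplus$ to subsets and unioning over a full copy of $G$ always meets a ``diagonal'' term $w\oplus w=R$. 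Distributivity is then inherited from the fact that for $x\neq 0$ left multiplication by $x$ is a bijection of $R$ fixing $0$ and permuting $G$, so it sends each of the four cases of the table to the corresponding case, while the case $x=0$ collapses both sides to $\{0\}$.

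The main obstacle I anticipate is precisely the associativity check: one must confirm that the collapse to $R$ is genuinely uniform across every distribution of zeros and every pattern of coincidences among $x,y,z$, and in particular that it already holds in the degenerate range $n=2$, where $G$ is trivial and the construction recovers the two-element Krasner hyperfield $\{0,1\}$ with $1\oplus 1=\{0,1\}$. Once associativity is secured, the remaining axioms are routine, and the resulting $(R,\oplus,\cdot,0,1)$ is a Krasner hyperfield of order $n$, which completes the proof since $n\geq 2$ was arbitrary.
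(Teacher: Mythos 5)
Your proposal is correct, but it takes a genuinely different route from the paper. The paper works with field-based constructions: it equips two finite fields $F$ and $K$ with hyperoperations (as in Example \ref{ex2}), then forms the cartesian product $F\times K$ with componentwise hyperaddition and multiplication, and appeals to the fundamental theorem of arithmetic to reach orders that are not prime powers. You instead build the structure directly on $R=G\cup\{0\}$, where $G$ is a cyclic group of order $n-1$, using the maximal hyperaddition $x\oplus x=R$ and $x\oplus y=R\setminus\{0\}$ for distinct nonzero $x,y$. Your route buys something real: it is uniform in $n$ (one construction, no factorization into prime-power pieces), and it sidesteps the weak point of the product approach, namely the multiplicative group axiom. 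In the paper's product, $((F\times K)\setminus\{(0_F,0_K)\},\cdot,(1_F,1_K))$ is in fact \emph{not} a group: an element such as $(1_F,0_K)$ has no inverse, because the product of the unit groups is $(F\setminus\{0_F\})\times(K\setminus\{0_K\})$, which is strictly smaller than $(F\times K)\setminus\{(0_F,0_K)\}$; the product even has zero divisors, e.g. $(1_F,0_K)\cdot(0_F,1_K)=(0_F,0_K)$. In your construction the multiplicative axioms hold by fiat, and the burden falls on the additive and distributive checks, which do go through: for nonzero $x,y,z$ each of $(x\oplus y)\oplus z$ and $x\oplus(y\oplus z)$ contains a term of the form $w\oplus w=R$ and hence equals $R$; the cases involving $0$ are immediate; reversibility reduces, as you note, to checks using $x'=x$; and distributivity holds because multiplication by a nonzero element fixes $0$ and permutes $G$, so it preserves each of the four cases of your table. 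Two small caveats: your remark that the table is ``essentially forced'' overstates matters (declaring every element to be its own opposite is a design choice, not a consequence of the axioms), and your sketch defers the zero-involving associativity cases and the reversibility case analysis, which should be written out in a final version, though they are routine and do succeed.
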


\begin{proof}
Let $(F,+_F,\cdot_F,0_F,1_F)$ and $(K,+_K,\cdot_K,0_K,1_K)$ be two finite fields. Their orders are  powers of prime.  Then we can only have the Krasner hyperfields $(F,\oplus_F,\cdot_F,0_F,1_F)$ and $(K,\oplus_K,\cdot_K,0_K,1_K)$ whose orders are only powers of prime corresponding to exactly the same elements in the underlying finite fields. But, with the product of $(F,+_F,\cdot_F,0_F,1_F)$ and $(K,+_K,\cdot_K,0_K,1_K)$, 
we can construct Krasner hyperfieds whose orders can be any of the integers $n\geq 2$, not necessarily powers of prime.  Define on the cartesian product $F\times K$ the hyperoperation
$$\oplus:F\times K\rightarrow \mathcal{P}^*(F\times K)$$ by $$(a,b)\oplus(c,d)=\displaystyle{\bigcup_{i\in a\oplus_Fc,j\in b\oplus_Kd}}(i,j),$$ with $(a,b), (c,d)\in F\times K$.

Here $(A,B)=\{(a,b)|~ a\in A$ and $b\in B\}$, where $A$ is a subset of $F$ and $B$ is a subset of $K$. Also, we have $$(A,B)\oplus(a,b)=\displaystyle{\bigcup_{i\in A, j\in B}}(i,j)\oplus(a,b).$$
The multiplication on $F\times K$ is defined by $$(a,b)\cdot(c,d)=(a\cdot_Fc, b\cdot_Kd),$$ where $(a,b), (c,d)\in F\times K$.
 If $A$ is a subset of $F$ and $B$ is a subset of $K$, then $(a,b)\cdot(A,B)=(a\cdot_FA,b\cdot_KB)$.

Now we prove that the algebraic hyperstructure $(F\times K, \oplus,\cdot, (0_F,0_K), (1_F,1_K))$ is a Krasner hyperfield.

We first prove that $(F\times K, \oplus, (0_F,0_K))$ is a canonical hypergroup satisfying the properties in Definition \ref{Canonical hypergroup}.\\
(i) \textbf{Additive identity element}\\
$(0_F,0_K)$ is the additive identity element of $F\times K$. Let $(a,b)$ be an element of $F\times K$.
\begin{eqnarray*}
               (a,b)\oplus(0_F,0_K) &=& \bigcup(a\oplus_F0_F,b\oplus_K0_K) \\
                &=& (\{a\},\{b\}) \\
                &=& \bigcup(0_F\oplus_Fa,0_K\oplus_Kb) \\
                &=& (0_F,0_K)\oplus(a,b).
             \end{eqnarray*}
(ii) \textbf{Commutativity}\\
Let $(a,b), (c,d)$ be two elements of $F\times K$.  \begin{eqnarray*}
               (a,b)\oplus(c,d) &=& (a\oplus_Fc,b\oplus_Kd) \\
                &=& (c\oplus_Fa,d\oplus_Kb) \\
                &=& (c,d)\oplus(a,b).
             \end{eqnarray*}
(iii) \textbf{Associativity}\\
Let $(a,b), (c,d), (e,f)$ be three elements of $F\times K$.
\begin{eqnarray*}
 (a,b)\oplus((c,d)\oplus(e,f))\quad &=& (a,b)\oplus(c\oplus_Fe,d\oplus_Kf) \\
   &=& (a\oplus_F(c\oplus_Fe),b\oplus_K(d\oplus_Kf)) \\
   &=& ((a\oplus_Fc)\oplus_Fe,(b\oplus_Kd)\oplus_Kf) \\
   &=& ((a,b)\oplus(c,d))\oplus(e,f).
\end{eqnarray*}
Therefore
$$(a,b)\oplus((c,d)\oplus(e,f))=((a,b)\oplus(c,d))\oplus(e,f).$$
(iv) \textbf{Opposite element}\\
Let $(a,b)$ be an element of $F\times K$. Assume that $a'$ is the opposite of $a$ in $F$ and $b'$ is the opposite of $b$ in $K$. Then,
$$(0_F,0_K)\in(a\oplus_Fa',b\oplus_Kb')=(a,b)\oplus(a',b').$$
Since $a'$ and $b'$ are unique, then $(a',b')$ is the unique opposite of $(a,b)$ in $F\times K$.\\
(v) \textbf{Reversibility}\\
Let $(a,b), (c,d), (e,f)$ be three elements of $F\times K$, such that $(a,b)\in (c,d)\oplus(e,f)$. Then, $(a,b)\in(c\oplus_Fe,d\oplus_Kf)$ implies $a\in c\oplus_Fe$ and $b\in d\oplus_Kf$. From $a\in c\oplus_Fe$ we know $e\in c'\oplus_Fa$ and $c\in a\oplus_Fe'$. Also, from $b\in d\oplus_Kf$ we get $f\in d'\oplus_Kb$ and $d\in b\oplus_Kf'$. Furthermore, $e\in c'\oplus_Fa$ and $f\in d'\oplus_Kb$ implies $$(e,f)\in(c'\oplus_Fa,d'\oplus_Kb)=(c',d')\oplus(a,b).$$
In the same way, $c\in a\oplus_Fe'$ and $d\in b\oplus_Kf'$ implies $$(c,d)\in(a\oplus_Fe',b\oplus_Kf')=(a,b)\oplus(e',f').$$ Hence, if $(a,b)\in (c,d)\oplus(e,f)$, we have $(e,f)\in(c',d')\oplus(a,b)$ and $(c,d)\in(a,b)\oplus(e',f')$. So the reversibility holds. Thus, $(F\times K, \oplus, (0_F,0_K))$ is a canonical hypergroup. \\ \\
Secondly, we prove that $(F\times K, \cdot, (0_F,0_K))$ is a semigroup having $(0_F,0_K)$ as a bilaterally absorbing element.\\
(vi) \textbf{Semigroup}\\
It is trivial to show that  $(F\times K, \cdot, (0_F,0_K))$ is a groupoid. We only show that associativity holds, in which case it is a semigroup.  Let $(a,b), (c,d), (e,f)$ be three elements of $F\times K$. Then
\begin{eqnarray*}
  (a,b)\cdot((c,d)\cdot(e,f)) &=& (a,b)\cdot(c\cdot_Fe,d\cdot_Kf) \\
   &=& (a\cdot_F(c\cdot_Fe),b\cdot_K(d\cdot_Kf)) \\
   &=& ((a\cdot_Fc)\cdot_Fe,(b\cdot_Kd)\cdot_Kf) \\
   &=& (a\cdot_Fc, b\cdot_Kd)\cdot(e,f) \\
   &=& ((a,b)\cdot(c,d))\cdot(e,f).
\end{eqnarray*}
(vii) \textbf{Bilaterally absorbing element}\\
$(0_F,0_K)$ is a bilaterally absorbing element. Let $(a,b)$ be an element of $F\times K$. Then
\begin{eqnarray*}
 (a,b)\cdot(0_F,0_K) &=& (a\cdot_F0_F,b\cdot_K0_K) \\
   &=& (0_F,0_K) \\
   &=& (0_F\cdot_Fa,0_K\cdot_Kb) \\
   &=& (0_F,0_K)\cdot(a,b).
\end{eqnarray*}
Thirdly, we prove that multiplication ``$\cdot$" is both left and right distributive over the hyperoperation ``$\oplus$".\\
Let $(a,b), (c,d), (e,f)$ be three elements of $F\times K$.
\begin{eqnarray*}
  (a,b)\cdot((c,d)\oplus(e,f)) &=& (a,b)\cdot(c\oplus_Fe,d\oplus_Kf) \\
   &=& (a\cdot_F(c\oplus_Fe),b\cdot_K(d\oplus_Kf)) \\
   &=& ((a\cdot_Fc)\oplus_F(a\cdot_Fe),(b\cdot_Kd)\oplus_K(b\cdot_Kf)) \\
   &=& ((a\cdot_Fc),(b\cdot_Kd))\oplus((a\cdot_Fe),(b\cdot_Kf)) \\
   &=& ((a,b)\cdot(c,d))\oplus((a,b)\cdot(e,f)).
\end{eqnarray*}
In the same way $((c,d)\oplus(e,f))\cdot(a,b)=((c,d)\cdot(a,b))\oplus((e,f)\cdot(a,b))$.\\
Lastly, we prove that $((F\times K)\setminus\{(0_F,0_K)\},\cdot,(1_F,1_K))$ is a group. This is true because if $(F\setminus\{0_F\},\cdot_F,1_F)$ and $(K\setminus\{0_K\},\cdot_K,1_K)$ are groups, so is their cartesian product since the product of groups is a group. This completes the prove that $(F\times K, \oplus,\cdot, (0_F,0_K), (1_F,1_K))$ is a Krasner hyperfield. Since the cardinality of a finite field is a prime power \cite{M1896}, we use the fundamental theorem of arithmetic \cite{HW79} to conclude that there always exists a finite Krasner hyperfields of order $n\geq2$. This ends the proof.
\end{proof}

\begin{remark}
From the work of Ameri, Eyvazi and Hoskova-Mayerova \cite{AEH20}, we construct the following table.
\vspace{0.2cm}
\begin{center}$\begin{array}{c|c}
   Number ~of~ elements & number~ of~ finite~ Krasner~ hyperfields~ up~ to~ isomorphism \\\hline
   2 & 2 \\\hline
   3 & 5 \\\hline
   4 & 7 \\\hline
   5 & 27 \\\hline
   6 & 16
 \end{array}
$\end{center}
\vspace{0.3cm}
This table allows us to affirm that there is no uniqueness for finite Krasner hyperfields as for finite fields.
\end{remark}

\section{conclusion}
This work proves that there   always exists  a finite Krasner hyperfields of integer order $n \geq 2$. Now we can tackle the problems of construction and classification of finite Krasner hyperfields without asking the question of existence. So, for a given integer $n\geq2$, how many Krasner hyperfields of order $n$ will we have? And are there fast algorithm for the construction of all of them?   We will try to solve these problems in next papers.

\section*{Competing interests}
The authors declare that they have no conflict of interests.


\vskip 0.4 true cm

\begin{center}{\textbf{Acknowledgments}}
\end{center}
This work is supported by Foundation of Chongqing Municipal Key Laboratory of Institutions of Higher Education ([2017]3), Foundation of Chongqing Development and Reform Commission (2017[1007]), and Foundation of Chongqing Three Gorges University. \\ \\
\vskip 0.4 true cm



\bigskip
\bigskip


{\footnotesize \pn{\bf Surdive Atamewoue Tsafack}\; \\ {School of Three Gorges Artificial Intelligence, Chongqing Three Gorges University, Chongqing 404100, P. R. China; Department of Mathematics, Higher Teacher Training College, University of Yaounde 1, Cameroon}\\
{\tt Email: surdive\@yahoo.fr}\\

{\footnotesize \pn{\bf Ogadoa Amassayoga}\; \\ {Department of Mathematics, Faculty of Science, University of Yaounde 1, Cameroon}\\
{\tt Email: cogadoaamassayoga\@yahoo.fr}\\

{\footnotesize \pn{\bf Babatunde Oluwaseun Onasanya}\; \\ {, Key Laboratory of Intelligent
 Information Processing and Control, Chongqing Three Gorges University, Chongqing 404100, P. R. China; Department of Mathematics, University of Ibadan,  Nigeria}\\
{\tt Email: babtu2001\@yahoo.com}\\

{\footnotesize \pn{\bf Yuming Feng}\; \\ {, School of Computer Science and Engineering, Chongqing Three Gorges University, Chongqing 404100, P. R. China; School of Electronic Information Engineering, Southwest University, Chongqing 400715, P. R. China}\\
{\tt Email: yumingfeng25928\@163.com}\\
\end{document}